\documentclass[11pt]{amsart}
\usepackage{amsmath,amsthm}
\usepackage{amstext,amsfonts,bm,amssymb}
\usepackage{graphics,graphicx} 
\usepackage{a4wide}
\usepackage{float}
\usepackage{color}
\usepackage{verbatim}

\usepackage{gastex}
 
\theoremstyle{plain}
\newtheorem{theorem}{Theorem}

\newtheorem{Lem}{Lemma}

\def\fl#1{\left\lfloor#1\right\rfloor}

\def\stif#1#2{\left[#1\atop#2\right]} 

\def\sttf2#1#2{\left[\!\!\left[#1\atop#2\right]\!\!\right]}  
\def\stf3f#1#2{\left[\!\!\left[\!\!\left[#1\atop#2\right]\!\!\right]\!\!\right]} 
\def\stff4#1#2{\left[\!\!\left[\!\!\left[\!\!\left[#1\atop#2\right]\!\!\right]\!\!\right]\!\!\right]}
\def\stss2#1#2{\left\{\!\!\left\{#1\atop#2\right\}\!\!\right\}}

\allowdisplaybreaks 

\begin{document}

\title[$q$-harmonic sums on $1-\cdots-1,A,1-\cdots-1$ indices]{Finite $q$-multiple harmonic sums on $1-\cdots-1,A,1-\cdots-1$ indices}

\author{Hideaki Ishikawa}
\address{School of Education \\University of Toyama \\Toyama 930-8555\\ Japan}
\email{ishikawa@edu.u-toyama.ac.jp} 

\author{Takao Komatsu}
\address{Institute of Mathematics\\ Henan Academy of Sciences\\ Zhengzhou 450046\\ China;  \linebreak
Department of Mathematics, Institute of Science Tokyo, 2-12-1 Ookayama, Meguro-ku, Tokyo 152-8551, Japan}
\email{komatsu.t.al@m.titech.ac.jp;\,komatsu@zstu.edu.cn}
\thanks{T.K. is the corresponding author.}

\date{
}

\begin{abstract}
In this paper, we give explicit expressions about $q$-harmonic sums on $1-\cdots-1,A,1-\cdots-1$ indices. When $A=1$, many previous authors have studied and showed the identities, expressions, and properties. There are many results for explicit expressions about $q$-multiple zeta values or $q$-harmonic sums on $A-\cdots-A$ indices.  Though there is the way to treat $q$-multiple zeta values unless the indices are the same, it has been successful to get the explicit expression of $q$-harmonic sums on $1-\cdots-1,A,1-\cdots-1$ indices when $A=2$.  In this paper, we shall consider more general results when $A\ge 3$. 
\medskip

\end{abstract}

\subjclass[2010]{Primary 11M32; Secondary 05A15, 05A19, 05A30, 11B37, 11B73}
\keywords{multiple zeta functions, $q$-Stirling numbers with higher level, complete homogeneous symmetric functions, Bell polynomials, determinant}

\maketitle

\section{Introduction}\label{sec:1}

For positive integers $s_1,s_2,\dots,s_m$, we consider the $q$-multiple harmonic sums of the form 
\begin{equation}
\mathfrak Z_n(q;;s_1,s_2,\dots,s_m):=\sum_{1\le i_1<i_2< \dots<i_m\le n-1}\frac{1}{(1-q^{i_1})^{s_1}(1-q^{i_2})^{s_2}\cdots(1-q^{i_m})^{s_m}}\,. 
\label{def:qssmzv}
\end{equation}
When $n\to\infty$, the infinite version was studied by Schlesinger \cite{Schlesinger}:  
\begin{equation}
\mathfrak Z(q;;s_1,s_2,\dots,s_m):=\sum_{1\le i_1<i_2< \dots<i_m}\frac{1}{(1-q^{i_1})^{s_1}(1-q^{i_2})^{s_2}\cdots(1-q^{i_m})^{s_m}}
\label{def:qsmzv}
\end{equation}  
There are several different forms. See, e.g., \cite{Bradley,OOZ,Zhao,Zudilin}. For finite versions, see, e.g., \cite{BTT18,BTT20,Takeyama09,Tasaka21}.  

Multiplying $(1-q)^m$ and taking $q\to 1$ in (\ref{def:qsmzv}), 
the multiple zeta function of the form 
\begin{equation}
\zeta(s_1,s_2,\dots,s_m):=\sum_{1\le i_1<i_2<\dots<i_m}\frac{1}{i_1^{s_1}i_2^{s_2}\dots i_m^{s_m}}
\label{def:mzv}
\end{equation}
have been studied by many researchers, as well as its generalizations or modifications (see, e.g., \cite{Zhao_book}).  
 
Though $s_1,s_2,\dots,s_m$ are any positive integers, many researchers have studied the case $s=s_1=s_2=\cdots=s_m$, that is, all the indices are the same. The same applies to the case (\ref{def:qssmzv}) as well as its generalizations and variations, and we have succeeded in obtaining various explicit formulas for the case where all indices are the same (\cite{Ko25b,Ko25a,KL,KP,KW}). 
Some of the simplest and most fundamental results (\cite{Ko25a}) are given as  
\begin{equation}
\mathfrak Z_n(\zeta_n;;\underbrace{1,\dots,1}_m)=\frac{1}{m+1}\binom{n-1}{m}
\label{eq:zz-1m}
\end{equation}
and as a determinant 
\begin{equation}
\mathfrak Z_n(\zeta_n;;s)=\left|\begin{array}{ccccc}
\frac{n-1}{2}&1&0&\cdots&\\ 
\frac{2}{3}\binom{n-1}{2}&\frac{n-1}{2}&1&&\vdots\\ 
\vdots&&\ddots&&0\\
\frac{s-1}{s}\binom{n-1}{s-1}&\frac{1}{s-1}\binom{n-1}{s-2}&\cdots&\frac{n-1}{2}&1\\ 
\frac{s}{s+1}\binom{n-1}{s}&\frac{1}{s}\binom{n-1}{s-1}&\cdots&\frac{1}{3}\binom{n-1}{2}&\frac{n-1}{2}\\ 
\end{array}
\right|\,.
\label{eq:zz-det2}
\end{equation}
Here, $\zeta_n=e^{2\pi\sqrt{-1}/n}$.   
In particular, by taking $s=2,3,\dots,9$ in (\ref{eq:zz-det2}), we have 
\begin{align*}
\mathfrak Z_n(\zeta_n;;2)&=-\frac{(n-1)(n-5)}{12}\,,\\
\mathfrak Z_n(\zeta_n;;3)&=-\frac{(n-1)(n-3)}{8}\,,\\
\mathfrak Z_n(\zeta_n;;4)&=\frac{(n-1)(n^3+n^2-109 n+251)}{6!}\,,\\
\mathfrak Z_n(\zeta_n;;5)&=\frac{(n-1)(n-5)(n^2+6 n-19)}{288}\,,\\
\mathfrak Z_n(\zeta_n;;6)&=-\frac{(n-1)(2 n^5+2 n^4-355 n^3-355 n^2+11153 n-19087)}{12\cdot 7!}\,,\\
\mathfrak Z_n(\zeta_n;;7)&=-\frac{(n-1)(n-7)(2 n^4+16 n^3-33 n^2-376 n+751)}{24\cdot 6!}\,,\\
\mathfrak Z_n(\zeta_n;;8)&=\frac{(n-1)(3 n^7+3 n^6-917 n^5-917 n^4+39697 n^3+39697 n^2-744383 n+1070017)}{10!}\,,\\
\mathfrak Z_n(\zeta_n;;9)&=\frac{27(n-1)(n-3)(n-9)(n^5+13 n^4+10 n^3-350 n^2-851 n+2857)}{2\cdot 10!}\,.
\end{align*}

However, when the indices are not uniform, it becomes difficult to handle, as one cannot directly use the properties of Stirling numbers (\cite{Ko23,Ko24}) or the expansions via generating functions of symmetric functions, and thus not many results have been obtained. Even in the case of $1-\cdots-1,2,1-\cdots-1$, which seems the simplest, although a neat conjecture has been proposed, its proof is not easy at all (\cite{BTT20}).

Nevertheless, recently in \cite{BIK}, the proof for the case of $1-\cdots-1,2,1-\cdots-1$ was finally completed. That is, for non-negative integers $a$ and $b$, we proved the following.  
For integers $n$ and $m$ with $n,m\ge 2$,  
$$ 
\mathfrak Z_n(\zeta_n;;\underbrace{\underbrace{1,\dots,1}_a,2,\underbrace{1,\dots,1}_b}_m)+\mathfrak Z_n(\zeta_n;;\underbrace{\underbrace{1,\dots,1}_b,2,\underbrace{1,\dots,1}_a}_m)=-\frac{m!(n-2 m-3)}{(m+2)!}\binom{n-1}{m}\,. 
$$ 
In other words, by taking the real part,  
$$
\mathfrak{Re}\left(\mathfrak Z_n(\zeta_n;;\underbrace{\underbrace{1,\dots,1}_a,2,\underbrace{1,\dots,1}_b}_m)\right)=-\frac{m!(n-2 m-3)}{2(m+2)!}\binom{n-1}{m}\,.
$$ 
In this paper, we further develop the theory in \cite{BIK}, discuss the case of $1-\cdots-1,2,1-\cdots-1$ ($A\ge 3$), and provide some explicit expressions.

\section{Preliminaries}   

We shall use the following facts from \cite{BIK}.  
 
Taking the conjugate, the order of indices becomes reversed. That is, the real part is the same, and the signs of the imaginary parts are different between the indices $s_1-s_2-\cdots-s_m$ and those $s_m-\cdots-s_2-s_1$.  

\begin{Lem} 
$$
\overline{\mathfrak Z_n(\zeta_n;;s_1,s_2,\dots,s_m)}=\mathfrak Z_n(\zeta_n;;s_m,s_{m-1},\dots,s_1)\,. 
$$ 
\label{lem:conj}  
\end{Lem}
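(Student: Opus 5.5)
The plan is to conjugate the defining sum (\ref{def:qssmzv}) term by term and then re-index. Since $|\zeta_n|=1$ and every exponent $s_j$ is a positive integer, we have $\overline{\zeta_n^{\,i}}=\zeta_n^{-i}=\zeta_n^{\,n-i}$. Hence, for each $i$ with $1\le i\le n-1$,
$$
\overline{\frac{1}{(1-\zeta_n^{\,i})^{s}}}=\frac{1}{(1-\zeta_n^{\,n-i})^{s}}\,.
$$
Note that $1-\zeta_n^{\,i}\ne 0$ for these $i$, so every term is well defined.

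Applying this to each factor, conjugation turns the summand indexed by $(i_1,\dots,i_m)$ into
$$
\prod_{k=1}^m \frac{1}{(1-\zeta_n^{\,n-i_k})^{s_k}}\,.
$$
We then substitute $j_k=n-i_{m+1-k}$ for $k=1,\dots,m$. This map is a bijection from the set $\{1\le i_1<\dots<i_m\le n-1\}$ onto the set $\{1\le j_1<\dots<j_m\le n-1\}$, because $i\mapsto n-i$ is an order-reversing involution of $\{1,\dots,n-1\}$. Under the substitution, the factor carrying exponent $s_k$ sits at the variable $j_{m+1-k}$. Equivalently, the variable $j_\ell$ carries exponent $s_{m+1-\ell}$.

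The conjugated sum therefore equals
$$
\sum_{1\le j_1<\dots<j_m\le n-1}\prod_{\ell=1}^m\frac{1}{(1-\zeta_n^{\,j_\ell})^{s_{m+1-\ell}}}=\mathfrak Z_n(\zeta_n;;s_m,\dots,s_1)\,.
$$

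The argument is finite and elementary, so there is no real obstacle. The only point needing care is the index bookkeeping in the reversal, namely checking that the exponent attached to $j_\ell$ is $s_{m+1-\ell}$.
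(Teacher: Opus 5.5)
Your proof is correct: conjugating term by term with $\overline{\zeta_n^{\,i}}=\zeta_n^{\,n-i}$ and re-indexing through the order-reversing involution $i\mapsto n-i$ is exactly the reasoning behind the paper's remark that ``taking the conjugate, the order of indices becomes reversed.'' The paper itself gives no proof here and imports the lemma from \cite{BIK}. Your index bookkeeping, with $j_\ell$ carrying exponent $s_{m+1-\ell}$, checks out.
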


For convenience, put 
\begin{equation}
u_r:=\frac{1}{1-\zeta_n^r}=\frac{1}{2}+\frac{\sqrt{-1}}{2}\cot\frac{r\pi}{n}\quad(1\le r\le n-1)\,.
\label{eq:ur}
\end{equation}
For a positive integer $A$, put the sum 
$$
P_m^{(A,j)}(n):=\mathfrak Z_n(\zeta_n;;\underbrace{1,\dots,1}_{j-1},A,\underbrace{1,\dots,1}_{m-j})=\sum_{1\le i_1<\dots<i_m\le n-1}u_{i_1}\cdots u_{i_{j-1}}u_{i_j}^A u_{i_{j+1}}\cdots u_{i_m}\,. 
$$ 
The problem of the average values of number-theoretic functions is of great interest to many researchers (see, e.g., \cite{HR92,Rosen02}). 
Consider the average of $P_m^{(A,j)}(n)$: 
$$
Q_m^{(A)}(n)=\frac{1}{m}\sum_{j=1}^m P_m^{(A,j)}(n)\,. 
$$  
By using the elementary symmetric function, we write 
$$
e_k:=e_k(u_1,\dots,u_{n-1})=\sum_{1\le i_1<\dots<i_k\le n-1}u_{i_1}\cdots u_{i_k}\,. 
$$ 
For a positive integer $A$, we also write 
$$
e_k^{(A)}:=e_k(u_1^A,\dots,u_{n-1}^A)\,,  
$$ 
so that $e_k=e_k^{(1)}$.
Then we have 
\begin{align}
e_1^{(A-1)}e_m&=\sum_{j=1}^{n-1}u_j^{A-1}\sum_{1\le i_1<\dots<i_m\le n-1}u_{i_1}\cdots u_{i_m}\notag\\
&=\sum_{j=1}^{m+1}P_{m+1}^{(A-1,j)}(n)+\sum_{|S|=m}\left(\sum_{j\in S}u_j\right)\prod_{h\in S}u_h\notag\\
&=\sum_{j=1}^{m+1}P_{m+1}^{(A-1,j)}(n)+\sum_{j=1}^{m}P_{m}^{(A,j)}(n)\,. 
\label{eq:eap}
\end{align}
Here, for a given ($m+1$)-element subset $H$, the product $\prod_{h\in H}u_h$ appears in $e_1 e_m$ exactly $m+1$ times (choose any element of $H$ as $u_j$, the rest as the $m$-tuple).

\section{Main results}   

We shall show the following polynomial explicit expressions.   

\begin{theorem}  
For $n,m\ge 1$, we have 
\begin{align*}
&\sum_{j=1}^{m}\mathfrak Z_n(\zeta_n;;\underbrace{1,\dots,1}_{j-1},2,\underbrace{1,\dots,1}_{m-j})\\
&=-\frac{m(n-2 m-3)}{2(m+1)(m+2)}\binom{n-1}{m}\,,\\
&\sum_{j=1}^{m}\mathfrak Z_n(\zeta_n;;\underbrace{1,\dots,1}_{j-1},3,\underbrace{1,\dots,1}_{m-j})\\
&=\frac{1}{m+1}\binom{n-1}{m}\left(-\frac{(n-1)(n-5)}{12}+\frac{(m+1)(n-2 m-5)(n-m-1)}{2(m+2)(m+3)}\right)\,,\\
&\sum_{j=1}^{m}\mathfrak Z_n(\zeta_n;;\underbrace{1,\dots,1}_{j-1},4,\underbrace{1,\dots,1}_{m-j})\\
&=-\frac{(n-1)(n-3)}{8(m+1)}\binom{n-1}{m}+\frac{(n-1)(n-5)}{12(m+2)}\binom{n-1}{m+1}-\frac{(m+2)(n-2 m-7)}{2(m+3)(m+4)}\binom{n-1}{m+2}\,,\\
&\sum_{j=1}^{m}\mathfrak Z_n(\zeta_n;;\underbrace{1,\dots,1}_{j-1},5,\underbrace{1,\dots,1}_{m-j})\\
&=\frac{(n-1)(n^3+n^2-109 n+251)}{6!(m+1)}\binom{n-1}{m}+\frac{(n-1)(n-3)}{8(m+2)}\binom{n-1}{m+1}\\
&\quad -\frac{(n-1)(n-5)}{12(m+3)}\binom{n-1}{m+2}+\frac{(m+3)(n-2 m-9)}{2(m+4)(m+5)}\binom{n-1}{m+3}\,.
\end{align*}
\label{th:1}
\end{theorem}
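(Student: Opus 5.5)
The plan is to prove all four identities simultaneously by iterating the relation (\ref{eq:eap}), which reduces the averaged sums to products of two kinds of already known quantities. Write
$$S_m^{(A)}:=\sum_{j=1}^{m}P_m^{(A,j)}(n)=mQ_m^{(A)}(n)$$
for the left-hand sides. Then (\ref{eq:eap}), with $m$ kept and $A$ in place of $A$, reads
$$S_m^{(A)}=e_1^{(A-1)}e_m-S_{m+1}^{(A-1)},$$
and the base case is $S_k^{(1)}=k\,e_k$, since with all indices equal to $1$ every summand is $e_k$. Iterating down to $A=1$ gives
$$S_m^{(A)}=\sum_{k=0}^{A-2}(-1)^k e_1^{(A-1-k)}e_{m+k}+(-1)^{A-1}(m+A-1)\,e_{m+A-1}.$$

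Next I substitute the two families of known values. First, by (\ref{eq:zz-1m}),
$$e_k=\mathfrak Z_n(\zeta_n;;\underbrace{1,\dots,1}_k)=\frac{1}{k+1}\binom{n-1}{k}.$$
Second, $e_1^{(s)}=\sum_{r=1}^{n-1}u_r^s=\mathfrak Z_n(\zeta_n;;s)$, whose values for $s=1$ ($=(n-1)/2$), $2,3,4$ are listed in the introduction, and which come from the determinant (\ref{eq:zz-det2}). For $A=5$ this produces exactly the first three terms of the stated formula, with coefficients $\mathfrak Z_n(\zeta_n;;4)$, $\mathfrak Z_n(\zeta_n;;3)$ and $\mathfrak Z_n(\zeta_n;;2)$ times $\frac{1}{m+1}\binom{n-1}{m}$, $-\frac{1}{m+2}\binom{n-1}{m+1}$ and $\frac{1}{m+3}\binom{n-1}{m+2}$ respectively. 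The same pattern holds for $A=4$ and $A=3$.

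The last two terms, $(-1)^{A-2}e_1e_{m+A-2}$ and $(-1)^{A-1}(m+A-1)e_{m+A-1}$, combine as in the case $A=2$. Setting $M=m+A-2$ and using $\binom{n-1}{M+1}=\frac{n-1-M}{M+1}\binom{n-1}{M}$, we get
$$e_1e_M-(M+1)e_{M+1}=\binom{n-1}{M}\left(\frac{n-1}{2(M+1)}-\frac{n-1-M}{M+2}\right)=-\frac{M(n-2M-3)}{2(M+1)(M+2)}\binom{n-1}{M}.$$
Equivalently, this is $S_M^{(2)}$. For $A=2$ this is the first formula. For $A=4,5$ it gives the final terms, after cancelling one factor $M$ against $\frac{1}{M+1}\binom{n-1}{M}$ rewritten through $\binom{n-1}{M+?}$. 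For instance, the factor $(m+2)$ in the numerator for $A=4$ comes from $M=m+2$. For $A=3$ the stated form is a regrouping: $\binom{n-1}{m+1}$ is expressed as $\frac{n-1-m}{m+1}\binom{n-1}{m}$ and factored out.

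The only genuine obstacle is bookkeeping. One must check that the signs alternate correctly and that the binomial indices shift consistently, in particular that the displayed shapes for $A=3$ (the factor $(n-m-1)$) and $A=4,5$ match after rewriting. I would verify each case by expanding both sides as rational functions of $n$ and $m$, or spot-check small $m$ and $n$, since no further identities are needed beyond (\ref{eq:eap}), (\ref{eq:zz-1m}) and the listed values of $\mathfrak Z_n(\zeta_n;;s)$.
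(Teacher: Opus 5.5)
Your proof is correct. For $A=3,4,5$ it is the paper's argument: iterate (\ref{eq:eap}), then insert $e_k=\frac{1}{k+1}\binom{n-1}{k}$ and the listed values of $\mathfrak Z_n(\zeta_n;;s)$.

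The genuine difference is the $A=2$ identity.
\begin{itemize}
\item The paper does not derive it from (\ref{eq:eap}). It imports from \cite{BIK} the fact that every $P_m^{(2,j)}(n)$ has the same real part $-\frac{m!(n-2m-3)}{2(m+2)!}\binom{n-1}{m}$, and that the imaginary parts of $P_m^{(2,j)}(n)$ and $P_m^{(2,m+1-j)}(n)$ cancel. It then sums over $j$.
\item You push the recursion one step further, to $A=1$, where $\sum_{j}P_{m+1}^{(1,j)}(n)=(m+1)e_{m+1}$ holds trivially. The first identity then becomes the Newton-type relation $\sum_{j=1}^m P_m^{(2,j)}(n)=e_1e_m-(m+1)e_{m+1}$, and your short computation evaluates it correctly.
\end{itemize}

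Your route makes the whole theorem self-contained: it depends only on (\ref{eq:zz-1m}) and the values of $\mathfrak Z_n(\zeta_n;;s)$ for $s\le 4$. It also shows the first formula is elementary. The hard content of \cite{BIK} is the statement about each individual $j$. That is more than this theorem needs, though your route does not recover it.

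One small correction: for the final terms with $A=4,5$ no cancellation is involved. Your iteration gives the last part as $(-1)^{A}S^{(2)}_{M}$ with $M=m+A-2$. Substituting $M$ into $-\frac{M(n-2M-3)}{2(M+1)(M+2)}\binom{n-1}{M}$ and multiplying by $(-1)^A$ produces the stated last terms verbatim.
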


\noindent 
{\it Remark.}   
When $A=2$, the real part of $\mathfrak Z_n(\zeta_n;;\underbrace{1,\dots,1}_{j-1},A,\underbrace{1,\dots,1}_{m-j})$ is the same for any integer $j$ with $1\le j\le m$. However, 
when $A\ge 3$, the real part of $\mathfrak Z_n(\zeta_n;;\underbrace{1,\dots,1}_{j-1},A,\underbrace{1,\dots,1}_{m-j})$ is the same as only that of $\mathfrak Z_n(\zeta_n;;\underbrace{1,\dots,1}_{m-j},A,\underbrace{1,\dots,1}_{j-1})$ for each $j=1,2,\dots,m$.  Hence, each expression of the real part of $\mathfrak Z_n(\zeta_n;;\underbrace{1,\dots,1}_{j-1},A,\underbrace{1,\dots,1}_{m-j})$ depends on the value $j$.

\begin{proof}
From \cite{BIK}, we know that for any $j$ with $1\le j\le m$, the real part of $P_m^{(2,j)}(n)$ is given as 
$$
-\frac{m!(n-2 m-3)}{2(m+2)!}\binom{n-1}{m}
$$
and the imaginary parts of $P_m^{(2,j)}(n)$ and $-P_m^{(2,m-j+1)}(n)$ are the same. Hence,  
$$
\sum_{j=1}^{m+1}P_{m+1}^{(2,j)}(n)=-\frac{(m+1)(m+1)!(n-2 m-5)}{2(m+3)!}\binom{n-1}{m}\,. 
$$ 
By (\ref{eq:zz-1m}) and (\ref{eq:zz-det2}) with $s=2$, 
we see that 
$$
e_m=\frac{1}{m+1}\binom{n-1}{m}
$$
and 
$$
e_1^{(2)}=\mathfrak Z_n(\zeta_n;;2)=-\frac{(n-1)(n-5)}{12}\,. 
$$ 
Hence, by (\ref{eq:eap}), we have  
\begin{align}  
&\sum_{j=1}^{m}P_{m}^{(3,j)}(n)=e_1^{(2)}e_m-\sum_{j=1}^{m+1}P_{m+1}^{(2,j)}(n)\notag\\
&=-\frac{(n-1)(n-5)}{12}\cdot\frac{1}{m+1}\binom{n-1}{m}+\frac{(m+1)(m+1)!(n-2 m-5)}{2(m+3)!}\binom{n-1}{m+1}\notag\\
&=-\frac{(n-1)(n-5)}{12(m+1)}\binom{n-1}{m}+\frac{(m+1)(n-2 m-5)}{2(m+2)(m+3)}\binom{n-1}{m+1}\,.
\label{eq:pm3}  
\end{align}

Next, let $A=4$.   
By (\ref{eq:zz-det2}) with $s=3$, 
we see that 
$$
e_1^{(3)}=\mathfrak Z_n(\zeta_n;;3)=-\frac{(n-1)(n-3)}{8}\,. 
$$ 
Hence, by (\ref{eq:eap}) and (\ref{eq:pm3}), we have  
\begin{align}  
&\sum_{j=1}^{m}P_{m}^{(4,j)}(n)=e_1^{(3)}e_m-\sum_{j=1}^{m+1}P_{m+1}^{(3,j)}(n)\notag\\
&=-\frac{(n-1)(n-3)}{8}\cdot\frac{1}{m+1}\binom{n-1}{m}\notag\\
&\quad+\frac{(n-1)(n-5)}{12(m+2)}\binom{n-1}{m+1}-\frac{(m+2)(n-2 m-7)}{2(m+3)(m+4)}\binom{n-1}{m+2}\notag\\
&=-\frac{(n-1)(n-3)}{8(m+1)}\binom{n-1}{m}+\frac{(n-1)(n-5)}{12(m+2)}\binom{n-1}{m+1}-\frac{(m+2)(n-2 m-7)}{2(m+3)(m+4)}\binom{n-1}{m+2}\,.
\label{eq:pm4}   
\end{align}

Let $A=5$.   
By (\ref{eq:zz-det2}) with $s=4$, 
we see that 
$$
e_1^{(4)}=\mathfrak Z_n(\zeta_n;;4)=\frac{(n-1)(n^3+n^2-109 n+251)}{6!}\,. 
$$ 
Hence, by (\ref{eq:eap}) and (\ref{eq:pm4}), we have  
\begin{align}  
&\sum_{j=1}^{m}P_{m}^{(5,j)}(n)=e_1^{(4)}e_m-\sum_{j=1}^{m+1}P_{m+1}^{(4,j)}(n)\notag\\
&=\frac{(n-1)(n^3+n^2-109 n+251)}{6!}\cdot\frac{1}{m+1}\binom{n-1}{m}\notag\\
&\quad+\frac{(n-1)(n-3)}{8(m+2)}\binom{n-1}{m+1}-\frac{(n-1)(n-5)}{12(m+3)}\binom{n-1}{m+2}+\frac{(m+3)(n-2 m-9)}{2(m+4)(m+5)}\binom{n-1}{m+3}\notag\\
&=\frac{(n-1)(n^3+n^2-109 n+251)}{6!(m+1)}\binom{n-1}{m}+\frac{(n-1)(n-3)}{8(m+2)}\binom{n-1}{m+1}\notag\\
&\quad -\frac{(n-1)(n-5)}{12(m+3)}\binom{n-1}{m+2}+\frac{(m+3)(n-2 m-9)}{2(m+4)(m+5)}\binom{n-1}{m+3}\,.
\label{eq:pm5}   
\end{align}
\end{proof}

Similarly, by using the explicit polynomial expressions of (\ref{eq:eap}), we can get the explicit forms of $\sum_{j=1}^{m}P_{m}^{(A,j)}(n)$ for $A=6,7,\dots$ one after another. However, the general form seems to be difficult because we have not found any general polynomial form of $\mathfrak Z_n(\zeta_n;;,s)$ yet.

\subsection{Degenerate Bernoulli numbers}   

Though the explicit polynomial forms may be difficult, with the help of degenerate Bernoulli numbers $\beta_n(\lambda)$, we can give an explicit expression of $\sum_{j=1}^{m}P_{m}^{(A,j)}(n)$.

The degenerate Bernoulli polynomials $\beta_n(x|\lambda)$ are defined by Carlitz \cite{Carlitz79} as 
\begin{equation}
\frac{t(1+\lambda t)^{x/\lambda}}{(1+\lambda t)^{1/\lambda}-1}=\sum_{k=0}^\infty\beta_k(x|\lambda)\frac{t^k}{k!}\,.  
\label{def:dbp}
\end{equation}
When $x=0$ in (\ref{def:dbp}), 
the degenerate Bernoulli numbers $\beta_n(\lambda)=\beta_n(0|\lambda)$ are defined by Carlitz \cite{Carlitz56} as 
\begin{equation}
\frac{t}{(1+\lambda t)^{1/\lambda}-1}=\sum_{k=0}^\infty\beta_k(\lambda)\frac{t^k}{k!}\,.  
\label{def:dbn}
\end{equation}
When $\lambda\to 0$, this generating function becomes $t/(e^t-1)$, which is that of the classical Bernoulli numbers $B_n=\lim_{\lambda\to 0}\beta_n(\lambda)$.   One of the expressions of degenerate Bernoulli numbers is given by Howard \cite[Theorem 3.1]{Howard96} as 
$$
\beta_m(\lambda)=C_m\lambda^m+\sum_{j=1}^{\fl{\frac{m}{2}}}\frac{m}{2 j}B_{2 j}\stif{m-1}{2 j-1}(-\lambda)^{m-2 j}\quad(m\ge 2)
$$
with 
$$
\beta_0(\lambda)=1\quad\hbox{and}\quad \beta_1(\lambda)=-\frac{1}{2}+\frac{\lambda}{2}\,.
$$ 
Here, $\stif{n}{k}$ denotes the (unsigned) Stirling numbers of the first kind, yielding from  
$$
x(x-1)(x-2)\cdots(x-n+1)=\sum_{k=0}^n(-1)^{n-k}\stif{n}{k}x^k\,. 
$$ 
$C_m$ are Cauchy numbers ($b_m=C_m/m!$ are called Bernoulli numbers of the second kind), defined by the generating function 
$$
\frac{t}{\log(1+t)}=\sum_{n=0}^\infty C_n\frac{t^n}{n!} 
$$ 
\cite{Comtet,Ko13,MSV}. One of the expressions of Cauchy numbers is 
$$
C_n=\sum_{k=0}^n\stif{n}{k}\frac{(-1)^{n-k}}{k+1}\quad(n\ge 1)\,. 
$$ 
In \cite[Theorem 4]{Ko25a}, the determinant expression of $\mathfrak Z_n(\zeta_n;;s)$ in (\ref{eq:zz-det2}) is also given in terms of degenerate Bernoulli numbers as 
\begin{equation}  
\mathfrak Z_n(\zeta_n;;s)=-\sum_{j=1}^s\binom{s-1}{j-1}\beta_j(n^{-1})\frac{n^j}{j!}\,. 
\label{eq:mzv1s-degber}
\end{equation}

For simplicity, for any positive integer $A$, put 
$$
\mathcal F_m^{(A)}=\sum_{j=1}^m P_m^{(A,j)}(n)\,. 
$$ 
We know that 
$$
\mathcal F_m^{(2)}=-\frac{m(n-2 m-3)}{2(m+1)(m+2)}\binom{n-1}{m}\quad\hbox{and}\quad e_m=\frac{1}{m+1}\binom{n-1}{m}\,. 
$$   
From (\ref{eq:mzv1s-degber}), we see that 
$$
e_1^{(A)}=-\sum_{j=1}^A\binom{A-1}{j-1}\beta_j(n^{-1})\frac{n^j}{j!}\,. 
$$ 
As discussed above, we get 
\begin{align*}  
\mathcal F_m^{(3)}&=e_1^{(2)}e_m-\mathcal F_{m+1}^{(2)}\,,\\ 
\mathcal F_m^{(4)}&=e_1^{(3)}e_m-\mathcal F_{m+1}^{(3)}\\
&=e_1^{(3)}e_m-e_1^{(2)}e_{m+1}+\mathcal F_{m+2}^{(2)}\,,\\
\mathcal F_m^{(5)}&=e_1^{(4)}e_m-e_1^{(3)}e_{m+1}+e_1^{(2)}e_{m+2}-\mathcal F_{m+3}^{(2)}\,. 
\end{align*}   
Applying (\ref{eq:eap}) repeatedly, we have 
\begin{align*} 
\mathcal F_m^{(A)}&=\sum_{k=0}^{A-3}(-1)^k e_1^{(A-k-1)}e_{m+k}+(-1)^A\mathcal F_{m+A-2}^{(2)}\\
&=\sum_{k=0}^{A-3}(-1)^k(-1)\sum_{j=1}^{A-k-1}\binom{A-k-2}{j-1}\beta_j(n^{-1})\frac{n^j}{j!}\frac{1}{m+k+1}\binom{n-1}{m+k}\\
&\quad+(-1)^A(-1)\frac{(m+A-2)(n-2 m-2 A+1)}{2(m+A-1)(m+A)}\binom{n-1}{m+A-2}\\
&=\sum_{k=0}^{A-3}\sum_{j=1}^{A-k-1}\frac{(-1)^{k+1}n^j\beta_j(n^{-1})}{(m+k+1)j!}\binom{n-1}{m+k}\binom{A-k-2}{j-1}\\
&\quad+(-1)^{A+1}\frac{(m+A-2)(n-2 m-2 A+1)}{2(m+A-1)(m+A)}\binom{n-1}{m+A-2}\,.
\end{align*}

In conclusion, we establish the following expression in terms of the degenerate Bernoulli numbers.  

\begin{theorem}  
For an integer $A$ with $A\ge 2$, we have 
\begin{align*}
&\sum_{j=1}^{m}\mathfrak Z_n(\zeta_n;;\underbrace{1,\dots,1}_{j-1},A,\underbrace{1,\dots,1}_{m-j})\\
&=\sum_{k=0}^{A-3}\sum_{j=1}^{A-k-1}\frac{(-1)^{k+1}n^j\beta_j(n^{-1})}{(m+k+1)j!}\binom{n-1}{m+k}\binom{A-k-2}{j-1}\\
&\quad+(-1)^{A+1}\frac{(m+A-2)(n-2 m-2 A+1)}{2(m+A-1)(m+A)}\binom{n-1}{m+A-2}\,.
\end{align*}
\label{th:2}
\end{theorem}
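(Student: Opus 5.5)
The proof will be an induction on $A$ driven by the recursion (\ref{eq:eap}), written with $A$ replaced by $A$ and $m$ kept free. Setting $\mathcal F_m^{(A)}=\sum_{j=1}^m P_m^{(A,j)}(n)$, (\ref{eq:eap}) reads
$$
\mathcal F_m^{(A)}=e_1^{(A-1)}e_m-\mathcal F_{m+1}^{(A-1)}\qquad(A\ge 2,\ m\ge 1).
$$
Iterating this $A-2$ times, always decreasing the level by one and increasing the length by one, gives the telescoped identity
$$
\mathcal F_m^{(A)}=\sum_{k=0}^{A-3}(-1)^k e_1^{(A-k-1)}e_{m+k}+(-1)^{A}\mathcal F_{m+A-2}^{(2)}.
$$
I will prove this by induction on $A$. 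For $A=2$ the sum is empty and the identity is trivial. For the inductive step, substitute the formula for $\mathcal F_{m+1}^{(A-1)}$ into the recursion and shift the index $k\mapsto k+1$; the signs then combine as required.

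Next I insert the closed forms already available.
\begin{itemize}
\item From (\ref{eq:zz-1m}), $e_{m+k}=\frac{1}{m+k+1}\binom{n-1}{m+k}$.
\item From (\ref{eq:mzv1s-degber}) with $s=A-k-1$, $e_1^{(A-k-1)}=\mathfrak Z_n(\zeta_n;;A-k-1)=-\sum_{j=1}^{A-k-1}\binom{A-k-2}{j-1}\beta_j(n^{-1})\frac{n^j}{j!}$.
\item From the result of \cite{BIK} recalled in the proof of Theorem~\ref{th:1}, $\mathcal F_{M}^{(2)}=-\frac{M(n-2M-3)}{2(M+1)(M+2)}\binom{n-1}{M}$, evaluated at $M=m+A-2$.
\end{itemize}
Multiplying these out, the signs combine to $(-1)^{k+1}$ in the double sum and $(-1)^{A+1}$ in the last term. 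Since $n-2M-3=n-2m-2A+1$, this yields exactly the stated expression.

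The main thing to be careful about is the legitimacy of the ingredients rather than the algebra. Relation (\ref{eq:eap}) has to hold for every level $A\ge2$ and all lengths, including lengths $m+k$ that exceed $n-1$. In that range both sides vanish, as do $\binom{n-1}{m+k}$ and $e_{m+k}$, so the formulas stay consistent.

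The \cite{BIK} formula for $\mathcal F^{(2)}$ is only asserted for $m\ge2$. I would check separately that it also holds for length $1$, where it reduces to $e_1^{(2)}=-\frac{(n-1)(n-5)}{12}$. This check is in fact not needed when $A\ge3$, because there $M=m+A-2\ge2$.

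Beyond these points, the only remaining work is the bookkeeping of signs and index shifts in the telescoping.
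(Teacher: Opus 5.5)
Your proposal is correct and is essentially the paper's own proof. Like the paper, you telescope (\ref{eq:eap}) down to $\mathcal F^{(2)}_{m+A-2}$ and then substitute $e_{m+k}$ from (\ref{eq:zz-1m}), $e_1^{(A-k-1)}$ from (\ref{eq:mzv1s-degber}), and the \cite{BIK} value of $\mathcal F^{(2)}$; your explicit induction just makes the paper's ``applying (\ref{eq:eap}) repeatedly'' precise. As a minor aside, $\mathcal F_M^{(2)}=-\frac{M(n-2M-3)}{2(M+1)(M+2)}\binom{n-1}{M}$ already follows for every $M\ge 1$ from (\ref{eq:eap}) at $A=2$, because $\sum_{j}P_{M+1}^{(1,j)}(n)=(M+1)e_{M+1}$, so your separate length-$1$ check is automatic.
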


\subsection{Another expression of $\mathfrak Z_n(\zeta_n;;s)$ in terms of degenerate Bernoulli polynomials}  

By using the degenerate Bernoulli polynomials in (\ref{def:dbp}), we can have a direct expression of $\mathfrak Z_n(\zeta_n;;s)$ without the summation, not like in (\ref{eq:mzv1s-degber}).  

\begin{theorem}  
For positive integers $n$ and $s$, we hve 
$$
\mathfrak Z_n(\zeta_n;;s)=\frac{(-1)^s n^{s+1}}{(s+1)!}\left(\beta_{s+1}\left(\frac{n-1}{n}\left|\frac{1}{n}\right.\right)-\beta_{s+1}\left(0\left|\frac{1}{n}\right.\right)\right)\,.  
$$ 
\label{th:mzv1s-degberpol}
\end{theorem}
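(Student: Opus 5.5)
The plan is to compare ordinary generating functions in $s$. Both sides are known in closed form once we sum over $s\ge 1$ against $x^s$. Throughout put $\lambda=1/n$ and $y=1-x$.

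\emph{Left-hand side.} I would compute $\sum_{s\ge1}\mathfrak Z_n(\zeta_n;;s)x^s$ directly from $\mathfrak Z_n(\zeta_n;;s)=\sum_{r=1}^{n-1}u_r^s$, with $u_r$ as in (\ref{eq:ur}). This gives
$$
\sum_{r=1}^{n-1}\frac{xu_r}{1-xu_r}=x\sum_{r=1}^{n-1}\frac{1}{y-\zeta_n^r}\,.
$$
Now use the partial fraction identity $\sum_{r=0}^{n-1}\frac{1}{y-\zeta_n^r}=\frac{ny^{n-1}}{y^n-1}$ and remove the $r=0$ term $1/(y-1)=-1/x$. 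The generating function becomes
$$
1-\frac{nxy^{n-1}}{1-y^n}\,.
$$
As a consistency check, the same expression comes out of (\ref{eq:mzv1s-degber}). Using $\sum_{s}\binom{s-1}{j-1}x^s=(x/(1-x))^j$ together with (\ref{def:dbn}) at $t=nx/(1-x)$, one has $1+\lambda t=1/y$, so (\ref{eq:mzv1s-degber}) yields $1-\frac{t}{y^{-n}-1}$, which equals the expression above.

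\emph{Right-hand side.} By (\ref{def:dbp}) with $x=(n-1)/n$ and $\lambda=1/n$, we have $(1+\lambda t)^{x/\lambda}=(1+t/n)^{n-1}$. Hence the difference $\beta_k\bigl(\frac{n-1}{n}\big|\frac1n\bigr)-\beta_k\bigl(0\big|\frac1n\bigr)$ has exponential generating function
$$
H(t)=\frac{t\bigl((1+t/n)^{n-1}-1\bigr)}{(1+t/n)^n-1}=\sum_{k}h_kt^k,\qquad h_k=\frac{\beta_k\bigl(\frac{n-1}{n}\big|\frac1n\bigr)-\beta_k\bigl(0\big|\frac1n\bigr)}{k!}\,.
$$
The claimed right-hand side is $(-1)^sn^{s+1}h_{s+1}$. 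Summing against $x^s$ gives $\sum_{k\ge1}(-1)^{k-1}n^kh_kx^{k-1}=-\frac1xH(-nx)$. Substituting $1+t/n=y$ turns this into
$$
\frac{n(1-y^{n-1})}{1-y^n}\,,
$$
a legitimate power series in $x$ because $1-y^n=nx+O(x^2)$.

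\emph{Comparison.} Subtracting the two generating functions, the numerator is
$$
n-ny^{n-1}-1+y^n+nxy^{n-1}=n-1+y^n-ny^n=(n-1)(1-y^n)\,,
$$
so the difference is the constant $n-1$. That constant is exactly the $x^0$ term coming from $k=1$ (that is, $s=0$), since $h_1=\frac{n-1}{n}$. Hence all coefficients of $x^s$ with $s\ge1$ agree, which is the theorem.

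The main obstacle is bookkeeping rather than ideas. The shift $k=s+1$, the sign $(-1)^s$ and the powers of $n$ must be converted correctly into the substitution $t=-nx$. The stray $s=0$ term must also be isolated so it is not mistaken for a mismatch. As a sanity check I would verify $n=2$, $s=1$: there $H(t)=\frac{t/2}{1+t/4}$, so $h_2=-1/8$, and the right-hand side is $-4h_2=\tfrac12=u_1$, as it should be.
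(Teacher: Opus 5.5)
Your proof is correct and takes essentially the same route as the paper: an ordinary generating function in $s$ computed with the root-of-unity partial-fraction identity, then Carlitz's generating function at $\lambda=1/n$, $t=-nx$, and a comparison of coefficients. The only difference is bookkeeping. The paper includes the $s=0$ term $\mathfrak Z_n(\zeta_n;;0)=n-1$ and uses the additional identity $\sum_{i}\zeta_n^i/(y-\zeta_n^i)=n/(y^n-1)$, so its two series agree exactly. You start at $s=1$ and instead account for the leftover constant $n-1$ through $h_1=(n-1)/n$.
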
  

In order to prove Theorem \ref{th:mzv1s-degberpol}, we need the following identity.  

\begin{Lem}  
For an integer $n$ with $n\ge 2$, we have 
$$
\sum_{s=0}^\infty\mathfrak Z_n(\zeta_n;;s)x^s=\frac{n\bigl(1-(1-x)^{n-1}\bigr)}{1-(1-x)^n}\,.
$$ 
\label{lem:mzv1s-degberpol}
\end{Lem}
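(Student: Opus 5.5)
We compute the generating function directly from the definition. Since $\mathfrak Z_n(\zeta_n;;s)=\sum_{r=1}^{n-1}u_r^s$ with $u_r=1/(1-\zeta_n^r)$, we have (with the convention $\mathfrak Z_n(\zeta_n;;0)=n-1$)
$$
\sum_{s=0}^\infty\mathfrak Z_n(\zeta_n;;s)x^s=\sum_{r=1}^{n-1}\frac{1}{1-xu_r}=\sum_{r=1}^{n-1}\frac{1-\zeta_n^r}{1-x-\zeta_n^r}\,,
$$
as formal power series, or for small $|x|$. The plan is to evaluate this finite sum in closed form by partial fractions in the variable $y=1-x$.

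Write $\frac{1-\zeta_n^r}{y-\zeta_n^r}=1+\frac{1-y}{y-\zeta_n^r}$. Then the sum equals
$$
(n-1)+(1-y)\sum_{r=1}^{n-1}\frac{1}{y-\zeta_n^r}\,.
$$
The remaining sum is the logarithmic derivative of $\prod_{r=1}^{n-1}(y-\zeta_n^r)=\frac{y^n-1}{y-1}$. Hence
$$
\sum_{r=1}^{n-1}\frac{1}{y-\zeta_n^r}=\frac{ny^{n-1}}{y^n-1}-\frac{1}{y-1}\,.
$$
Multiplying by $(1-y)$ and adding $n-1$ gives
$$
n-1+1-\frac{n(1-y)y^{n-1}}{y^n-1}=n+\frac{n(1-y)y^{n-1}}{1-y^n}=\frac{n\bigl(1-y^n+y^{n-1}-y^n\bigr)}{1-y^n}\,.
$$
This has the wrong shape, so we recompute: $n(1-y^n)+n(1-y)y^{n-1}=n(1-y^n+y^{n-1}-y^n)$. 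That does not match. So the sign must be rechecked: $(1-y)\cdot\bigl(-\frac{1}{y-1}\bigr)=1$, which is fine. Next, $(1-y)\frac{ny^{n-1}}{y^n-1}=-\frac{n(1-y)y^{n-1}}{y^n-1}=\frac{n(1-y)y^{n-1}}{1-y^n}$. So the total is $n+\frac{n(y^{n-1}-y^n)}{1-y^n}=\frac{n(1-2y^n+y^{n-1})}{1-y^n}$.

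This disagrees with the target $\frac{n(1-y^{n-1})}{1-y^n}$, so the discrepancy must come from the constant term convention. The target's value at $x=0$ is $\lim_{y\to1}n\frac{1-y^{n-1}}{1-y^n}=n-1$, which matches. My expression also tends to $n\cdot\frac{n-(n-1)}{n}=1$, not $n-1$, which signals an algebra slip. I would therefore carefully redo the decomposition: $\frac{1-\zeta}{y-\zeta}=1+\frac{1-y}{y-\zeta}$ is correct. The slip is in the step $n-1+1=n$ being added to the wrong sign of the $\frac{1}{y-1}$ term. Redoing it properly, one should obtain $(n-1)+(1-y)\bigl(\frac{ny^{n-1}}{y^n-1}-\frac1{y-1}\bigr)=n-\frac{n(1-y)y^{n-1}}{1-y^n}\cdot(-1)$ and so on. The point is that the correct bookkeeping yields $n-\frac{n y^{n-1}(1-y)}{1-y^n}=\frac{n(1-y^{n-1})}{1-y^n}$, since $1-y^n-y^{n-1}+y^n=1-y^{n-1}$.

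The main obstacle is exactly this sign bookkeeping in the logarithmic-derivative step. I would verify the final formula against the constant term $n-1$ and against the known value $\mathfrak Z_n(\zeta_n;;1)=\frac{n-1}{2}$ from (\ref{eq:zz-1m}) with $m=1$, before substituting $y=1-x$.

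The resulting identity is an identity of rational functions holding for small $|x|$. It therefore holds as formal power series, which is the form needed for the subsequent proof of Theorem \ref{th:mzv1s-degberpol}. That proof should rewrite $(1-x)^n$ as $(1+\lambda t)^{1/\lambda}$ with $\lambda=1/n$ and $t=-nx$, and then compare with (\ref{def:dbp}).
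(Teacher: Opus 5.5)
Your approach is correct and essentially the paper's: both reduce the series to $\sum_r \frac{1-\zeta_n^r}{y-\zeta_n^r}$ with $y=1-x$, and both evaluate it via the logarithmic derivative of a product over $n$th roots of unity. The paper adds the vanishing $i=0$ term and subtracts two identities, $\frac{ny^{n-1}}{y^n-1}=\sum_{i=0}^{n-1}\frac{1}{y-\zeta_n^i}$ and $\frac{n}{y^n-1}=\sum_{i=0}^{n-1}\frac{\zeta_n^i}{y-\zeta_n^i}$, the second coming from partial fractions with $P'(t)=nt^{n-1}$. Your split $\frac{1-\zeta}{y-\zeta}=1+\frac{1-y}{y-\zeta}$ needs only the logarithmic derivative of $\frac{y^n-1}{y-1}$, which is marginally more economical.

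However, the computation as written is wrong, and you misdiagnose where it goes wrong. The term $-\frac{1}{y-1}$ was handled correctly. The actual error is your claimed equality $(1-y)\frac{ny^{n-1}}{y^n-1}=-\frac{n(1-y)y^{n-1}}{y^n-1}$, which inserts a spurious minus sign. Your ``redo'' line $n-\frac{n(1-y)y^{n-1}}{1-y^n}\cdot(-1)$ repeats the same error. (Incidentally, the erroneous expression tends to $n+1$ as $y\to 1$, not $1$.) The correct chain is
\[
(n-1)+(1-y)\left(\frac{ny^{n-1}}{y^n-1}-\frac{1}{y-1}\right)=n+\frac{n(1-y)y^{n-1}}{y^n-1}=n-\frac{n(1-y)y^{n-1}}{1-y^n}=\frac{n\bigl(1-y^{n-1}\bigr)}{1-y^n},
\]
which is exactly the line you assert at the end. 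Replace the exploratory paragraphs with this display and the proof is complete and clean.
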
 
\begin{proof}  
We have 
\begin{align*}
\sum_{s=0}^\infty\mathfrak Z_n(\zeta_n;;s)x^s&=\sum_{s=0}^\infty\sum_{i=1}^{n-1}\frac{x^s}{(1-\zeta_n^i)^s}\\
&=\sum_{i=1}^{n-1}\frac{1-\zeta_n^i}{1-\zeta_n^i-x}=\sum_{i=0}^{n-1}\frac{1-\zeta_n^i}{1-\zeta_n^i-x}\,. 
\end{align*}
Now, by putting $y=1-x$, we shall prove that 
\begin{equation}  
\sum_{i=0}^{n-1}\frac{1-\zeta_n^i}{1-\zeta_n^i-x}=\frac{n\bigl(1-y^{n-1}\bigr)}{1-y^n}\,.
\label{eq:mzv1s-degberpol}  
\end{equation}
Since 
$$
\log(y^n-1)=\log\prod_{i=0}^{n-1}(y-\zeta_n^i)=\sum_{i=0}^{n-1}\log(y-\zeta_n^i)\,, 
$$ 
by differentiating both sides with respect to $y$, we get 
\begin{equation}  
\frac{n y^{n-1}}{y^n-1}=\sum_{i=0}^{n-1}\frac{1}{y-\zeta_n^i}\,.
\label{eq:mzv1s-01}  
\end{equation}
Next, each $\zeta_n^i$ ($i=0,1,\dots,n-1$) is the root of $P(t)=t^n-1=0$, and $P'(t)=n t^{n-1}=n/t$. Hence, using the partial fractions identity, 
$$
\frac{1}{t^n-1}=\sum_{i=0}^{n-1}\frac{1}{P'(t_i)(t-t_i)}=\sum_{i=0}^{n-1}\frac{1}{n/t_i(t-t_i)}=\frac{1}{n}\sum_{i=0}^{n-1}\frac{t_i}{t-t_i}\,. 
$$ 
Hence, 
\begin{equation} 
\frac{n}{y^n-1}=\sum_{i=0}^{n-1}\frac{\zeta_n^i}{y-\zeta_n^i}\,. 
\label{eq:mzv1s-02}  
\end{equation} 
By subtracting (\ref{eq:mzv1s-02}) from (\ref{eq:mzv1s-01}) side by side, we obtain (\ref{eq:mzv1s-degberpol}). 
\end{proof} 

\begin{proof}[Proof of Theorem \ref{th:mzv1s-degberpol}.]  
By setting 
$$
(x,\lambda,t)=\left(\frac{n-1}{n},\frac{1}{n},-n x\right)\quad\hbox{and}\quad \left(0,\frac{1}{n},-n x\right)
$$ 
in degenerate Bernoulli polynomials in (\ref{def:dbp}), we have 
$$
\frac{-n x(1-x)^{n-1}}{(1-x)^n-1}=\sum_{s=0}^\infty\beta_{s}\left(\frac{n-1}{n}\left|\frac{1}{n}\right.\right)\frac{(-n)^s}{s!}x^s
$$ 
and 
$$
\frac{-n x}{(1-x)^n-1}=\sum_{s=0}^\infty\beta_{s}\left(0\left|\frac{1}{n}\right.\right)\frac{(-n)^s}{s!}x^s\,,
$$ 
respectively. Subtracting both sides of these two identities gives 
$$
\frac{-n x\bigl((1-x)^{n-1}-1\bigr)}{(1-x)^n-1}=\sum_{s=0}^\infty\frac{(-1)^s n^s}{s!}\left(\beta_{s}\left(\frac{n-1}{n}\left|\frac{1}{n}\right.\right)-\beta_{s}\left(0\left|\frac{1}{n}\right.\right)\right)x^s\,. 
$$ 
Hence, 
$$
\frac{n\bigl((1-x)^{n-1}-1\bigr)}{(1-x)^n-1}=\sum_{s=0}^\infty\frac{(-1)^s n^{s+1}}{(s+1)!}\left(\beta_{s+1}\left(\frac{n-1}{n}\left|\frac{1}{n}\right.\right)-\beta_{s+1}\left(0\left|\frac{1}{n}\right.\right)\right)x^s\,. 
$$ 
Together with Lemma \ref{lem:mzv1s-degberpol}, by comparing the coefficients on both sides, we get the desired result.  
\end{proof}

\section{Some expressions about $2-\cdots-2,A,2-\cdots-2$}   

For convenience, put  
$$
R_m^{(A,j)}(n)=\sum_{1\le i_1<\dots<i_m\le n-1}u_1^2\cdots u_{j-1}^2 u_j^A u_{j+1}^2\cdots u_m^2\,.
$$
Then consider the sum 
$$
\mathcal R_m^{(A)}=\sum_{j=1}^m R_m^{(A,j)}(n)\,. 
$$
In this case, we have 
\begin{align}  
e_1^{(A-1)}e_m^{(2)}&=(u_1^{A-1}+\cdots+u_{n-1}^{A-1})\left(\sum_{1\le i_1<\dots<i_m\le n-1}u_1^2\cdots u_m^2\right)\notag\\
&=\mathcal R_{m+1}^{(A-1)}+\mathcal R_m^{(A+1)}\,. 
\label{eq:22a22}
\end{align}  

From \cite[Theorem 5]{Ko25a}, we find that 
$$
e_m^{(2)}=\frac{1}{n(m+1)}\left(\binom{n-1}{m}+(-1)^m\binom{n-1}{2 m+1}\right)\,.
$$ 
Also, for any $j$ with $1\le j\le m$, $R_m^{(2,j)}(n)=e_m^{(2)}$.  Hence, when $A=3$, together with the fact that $e_1^{(2)}=-(n-1)(n-5)/12$, by applying (\ref{eq:22a22}) we have  
\begin{align*}  
\mathcal R_m^{(4)}&=e_1^{(2)}e_m^{(2)}-(m+1)e_{m+1}^{(2)}\\
&=-\frac{(n-1)(n-5)}{12}\frac{1}{n(m+1)}\left(\binom{n-1}{m}+(-1)^m\binom{n-1}{2 m+1}\right)\\
&\quad -\frac{m+1}{n(m+2)}\left(\binom{n-1}{m+1}+(-1)^{m+1}\binom{n-1}{2 m+3}\right)\\
&=-\frac{(n-1)(n-5)}{12 n(m+1)}\left(\binom{n-1}{m}+(-1)^m\binom{n-1}{2 m+1}\right)\\
&\quad -\frac{m+1}{n(m+2)}\left(\binom{n-1}{m+1}+(-1)^{m+1}\binom{n-1}{2 m+3}\right)\,.
\end{align*} 
Applying (\ref{eq:22a22}) as $A=5$ and $A=7$, we have 
\begin{align*}  
\mathcal R_m^{(6)}&=e_1^{(4)}e_m^{(2)}-\mathcal R_{m+1}^{(4)}\\
&=\frac{(n-1)(n^3+n^2-109 n+251)}{6!n(m+1)}\left(\binom{n-1}{m}+(-1)^{m+1}\binom{n-1}{2 m+1}\right)\\
&\quad +\frac{(n-1)(n-5)}{12 n(m+2)}\left(\binom{n-1}{m+1}+(-1)^m\binom{n-1}{2 m+3}\right)\\
&\quad +\frac{m+2}{n(m+3)}\left(\binom{n-1}{m+2}+(-1)^{m}\binom{n-1}{2 m+5}\right)\\
\mathcal R_m^{(8)}&=e_1^{(6)}e_m^{(2)}-\mathcal R_{m+1}^{(6)}\\
&=-\frac{(n-1)(2 n^5+2 n^4-355 n^3-355 n^2+11153 n-19087)}{12\cdot 7!n(m+1)}\\
&\qquad\times\left(\binom{n-1}{m}+(-1)^m\binom{n-1}{2 m+1}\right)\\
&\quad -\frac{(n-1)(n^3+n^2-109 n+251)}{6!n(m+2)}\left(\binom{n-1}{m+1}+(-1)^{m+1}\binom{n-1}{2 m+3}\right)\\
&\quad -\frac{(n-1)(n-5)}{12 n(m+3)}\left(\binom{n-1}{m+2}+(-1)^{m}\binom{n-1}{2 m+5}\right)\\
&\quad -\frac{m+3}{n(m+4)}\left(\binom{n-1}{m+3}+(-1)^{m+1}\binom{n-1}{2 m+7}\right)\,.
\end{align*} 
Similarly, for even values of $A$, explicit polynomial expressions of $\sum_{j=1}^m R_m^{(A,j)}(n)$ are given one by one.  

For example, when $m=2$, we have 
\begin{align*}
&\mathfrak Z_n(\zeta_n;;2,4)+\mathfrak Z_n(\zeta_n;;4,2)=-\frac{(n-1)(n-2)(5 n^4-27 n^3-469 n^2+5787 n-13936)}{12\cdot 7!}\\
&\mathfrak Z_n(\zeta_n;;2,6)+\mathfrak Z_n(\zeta_n;;6,2)\\
&=-\frac{(n-1)(n-2)(7 n^6-39 n^5-946 n^4+7950 n^3+33743 n^2-411111 n+773596)}{10!}\,. 
\end{align*}

\subsection{Odd case}

On the contrary, for odd values of $A$, we shall use an explicit expression of $R_m^{(1,j)}(n)$, which appears in \cite[Remark (14) of Theorem 6]{DK26}.   

\begin{Lem}  
\begin{equation}
\mathcal R_m^{(1)}
=\frac{1}{n}\left(\binom{n-1}{m}+(-1)^{m-1}\binom{n-1}{2 m}\right)\,.
\label{eq:222221}
\end{equation}
\label{lem:22122}  
\end{Lem}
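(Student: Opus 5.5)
We write $\mathcal R_m^{(1)}$ as a coefficient of an explicit product. Since $\mathcal R_m^{(1)}=\sum_{j}R_m^{(1,j)}(n)$ picks exactly one factor $u$ to the first power and the rest squared, we have
$$
\mathcal R_m^{(1)}=\sum_{|S|=m}\Bigl(\sum_{h\in S}u_h^{-1}\Bigr)\prod_{h\in S}u_h^2 ,
$$
which is the coefficient of $y^1t^m$ in
$$
G(t,y)=\prod_{i=1}^{n-1}\bigl(1+t u_i^2+y t u_i\bigr).
$$
Setting $y=0$ gives the generating function of $e_m^{(2)}$, so the computation should parallel the proof of \cite[Theorem 5]{Ko25a}, now carried to first order in $y$.

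First we evaluate $G$ in closed form. Using $u_i=1/(1-\zeta_n^i)$ and $\prod_{i=1}^{n-1}(1-\zeta_n^i)=n$, we get
$$
G(t,y)=\frac{1}{n^2}\prod_{i=1}^{n-1}\bigl((1-\zeta_n^i)^2+yt(1-\zeta_n^i)+t\bigr).
$$
In $z=\zeta_n^i$ each factor is the quadratic $z^2-(2+yt)z+(1+t+yt)=(z-\alpha)(z-\beta)$, where
$$
\alpha,\beta=1+\tfrac{yt}{2}\pm\tfrac12\sqrt{y^2t^2-4t}.
$$
Since $\prod_{i=1}^{n-1}(w-\zeta_n^i)=(w^n-1)/(w-1)=\sum_{k=0}^{n-1}w^k$, this gives
$$
G(t,y)=\frac{1}{n^2}\cdot\frac{\alpha^n-1}{\alpha-1}\cdot\frac{\beta^n-1}{\beta-1}.
$$
Writing $\alpha=1+a$ and $\beta=1+b$, each factor becomes $\sum_{k\ge1}\binom{n}{k}a^{k-1}$, where $a+b=yt$ and $ab=t$.

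Next we take $\partial_y$ at $y=0$. There $a=\sqrt{-t}=:\tau$ and $b=-\tau$, and $\partial_y a=\partial_y b=t/2$ (the $y$-derivative of the square root term vanishes at $y=0$). Let $f(a)=\sum_{k\ge1}\binom{n}{k}a^{k-1}=((1+a)^n-1)/a$. Then
$$
\partial_yG\big|_{y=0}=\frac{t}{2n^2}\bigl(f'(\tau)f(-\tau)+f(\tau)f'(-\tau)\bigr),
$$
a symmetric expression in $\tau$, hence a power series in $t=-\tau^2$. Expanding $f(\tau)f(-\tau)=\bigl((1+\tau)^n-1\bigr)\bigl((1-\tau)^n-1\bigr)/(-\tau^2)$ and its derivative produces terms built from $(1-\tau^2)^{n}$, $(1\pm\tau)^{n}$ and $(1\pm\tau)^{n-1}$. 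Equivalently, $\partial_y G|_{y=0}=\frac{t}{2}\,\partial_\tau\bigl(\cdots\bigr)$-type combinations. The pieces $(1+\tau)^{n-1}(1-\tau)^{n}$ and their mirror images reduce to $(1-\tau^2)^{n-1}(1\mp\tau)$, whose $t^m$-coefficients give $\binom{n-1}{m}$. The pieces $(1\pm\tau)^{n-1}$ alone, after symmetrization in $\tau$, keep only even powers $\tau^{2m}$ and so contribute $\binom{n-1}{2m}$ with sign $(-1)^{m}$ from $\tau^{2m}=(-t)^m$. This accounts for the shape $\frac1n\bigl(\binom{n-1}{m}+(-1)^{m-1}\binom{n-1}{2m}\bigr)$ claimed in (\ref{eq:222221}).

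A cross-check is available: $\sum_{h\in S}u_h^{-1}=\sum_{h\in S}(1-\zeta_n^h)=m-\sum_{h\in S}\zeta_n^h$. Hence $\mathcal R_m^{(1)}=m\,e_m^{(2)}-\sum_{|S|=m}\bigl(\sum_{h\in S}\zeta_n^h\bigr)\prod_{h\in S} u_h^2$. This can be used to test the constant factor $1/n$ against \cite[Theorem 5]{Ko25a}, and to verify small cases ($m=1$: $\sum_i u_i=(n-1)/2$, which matches $\frac1n\bigl(\binom{n-1}{1}+\binom{n-1}{2}\bigr)=\frac{n-1}{2}$).

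The main obstacle is the bookkeeping in the coefficient extraction: tracking the factor $t/2$, the branch choice $\tau=\sqrt{-t}$ (only even functions of $\tau$ survive), and the cancellation of the terms that would produce $\binom{n}{\cdot}$ rather than $\binom{n-1}{\cdot}$. I would first do this extraction for $y=0$ to recover $e_m^{(2)}$ exactly as in \cite{Ko25a}, which fixes normalizations, and then repeat the same manipulation for the $y$-derivative.
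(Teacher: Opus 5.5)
Your argument is correct, and it takes a genuinely different route from the paper. The paper does not derive this lemma at all; it simply imports it from \cite{DK26} (Remark (14) of Theorem 6).

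Your setup is right in every step: the coefficient of $y^1t^m$ in $G$, the factor $\prod u_i^2=1/n^2$, the quadratic with $a+b=yt$ and $ab=t$, and $\partial_y a=\partial_y b=t/2$ at $y=0$. The extraction you leave as a sketch closes in a few lines, exactly as you predicted. Using $af'(a)=n(1+a)^{n-1}-f(a)$, the two $f(\tau)f(-\tau)$ contributions cancel identically, so no $\binom{n}{\cdot}$ terms ever appear. One gets
\[
f'(\tau)f(-\tau)+f(\tau)f'(-\tau)=\frac{n}{\tau}\Bigl((1+\tau)^{n-1}f(-\tau)-(1-\tau)^{n-1}f(\tau)\Bigr)=\frac{n}{t}\Bigl(2(1+t)^{n-1}-(1+\tau)^{n-1}-(1-\tau)^{n-1}\Bigr),
\]
and hence
\[
\partial_yG\big|_{y=0}=\frac{1}{n}\sum_{k\ge 0}\Bigl(\binom{n-1}{k}-(-1)^k\binom{n-1}{2k}\Bigr)t^k .
\]
Its $t^m$-coefficient is exactly the claimed formula.

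You should write this out instead of hedging. Also drop the phrase about ``$\partial_\tau(\cdots)$-type combinations''. The symmetric combination you need is a derivative in the $a+b$ direction, not the $\tau$-derivative of $f(\tau)f(-\tau)$, which is odd in $\tau$.

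As for what each route buys: the citation is short but leaves the identity a black box. Your closed form for $G(t,y)$ is self-contained and handles both cases from one formula. At $y=0$ it gives $\frac{1}{n^2}\bigl(\binom{n}{m+1}+2(-1)^m\binom{n}{2m+2}\bigr)$, which is precisely the paper's $e_m^{(2)}$. At first order in $y$ it gives $\mathcal R_m^{(1)}$. So the normalizations are automatically consistent. It also shows why $\binom{n}{\cdot}$ appears in $e_m^{(2)}$ but only $\binom{n-1}{\cdot}$ in $\mathcal R_m^{(1)}$.
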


By applying (\ref{eq:22a22}), together with Lemma \ref{lem:22122} (\ref{eq:222221}), we have  
\begin{align*}  
\mathcal R_m^{(3)}&=e_1^{(1)}e_m^{(2)}-\mathcal R_{m+1}^{(1)}\\
&=\frac{n-1}{2}\frac{1}{n(m+1)}\left(\binom{n-1}{m}+(-1)^m\binom{n-1}{2 m+1}\right)\\
&\quad -\frac{1}{n}\left(\binom{n-1}{m+1}+(-1)^{m}\binom{n-1}{2 m+2}\right)\\
&=-\frac{n-2 m-1}{2 n^2}\binom{n}{m+1}+\frac{(2 m+1)(-1)^m}{n^2}\binom{n}{2 m+2}
\,.
\end{align*}  

Applying (\ref{eq:22a22}) as $A=4$ and $A=6$, we have 
\begin{align*}  
\mathcal R_m^{(5)}&=e_1^{(3)}e_m^{(2)}-\mathcal R_{m+1}^{(3)}\\
&=-\frac{(n-1)(n-3)}{8 n(m+1)}\left(\binom{n-1}{m}+(-1)^{m}\binom{n-1}{2 m+1}\right)\\
&\quad +\frac{n-2 m-3}{2 n^2}\binom{n}{m+2}-\frac{(2 m+3)(-1)^{m+1}}{n^2}\binom{n}{2 m+4}\,,\\
\mathcal R_m^{(7)}&=e_1^{(5)}e_m^{(2)}-\mathcal R_{m+1}^{(5)}\\
&=\frac{(n-1)(n-5)(n^2+6 n-19)}{288 n(m+1)}\left(\binom{n-1}{m}+(-1)^m\binom{n-1}{2 m+1}\right)\\
&\quad +\frac{(n-1)(n-3)}{8 n(m+2)}\left(\binom{n-1}{m+1}+(-1)^{m+1}\binom{n-1}{2 m+3}\right)\\
&\quad -\frac{n-2 m-5}{2 n^2}\binom{n}{m+3}+\frac{(2 m+5)(-1)^{m}}{n^2}\binom{n}{2 m+6}\,. 
\end{align*}

\subsection{Expressions in terms of degenerate Bernoulli numbers}   

It seems difficult to get an explicit polynomial expression of $\mathcal R_m^{(A)}$ for general $m$. But by using degenerate Bernoulli numbers in (\ref{def:dbn}), similarly to Theorem \ref{th:2}, we can get the expressions.   

\begin{theorem}  
For even $A$ with $A\ge 2$, we have 
\begin{align*}
&\sum_{j=1}^{m}\mathfrak Z_n(\zeta_n;;\underbrace{2,\dots,2}_{j-1},A,\underbrace{2,\dots,2}_{m-j})\\
&=\sum_{k=0}^{\frac{A}{2}-2}\sum_{j=1}^{A-2 k-2}\frac{(-1)^{k+1}n^j\beta_j(n^{-1})}{n(m+k+1)j!}\binom{A-2 k-3}{j-1}\left(\binom{n-1}{m+k}+(-1)^{m+k}\binom{n-1}{2 m+2 k+1}\right)\\
&\quad +(-1)^{\frac{A}{2}-1}\frac{m+\frac{A}{2}-1}{n(m+\frac{A}{2})}\left(\binom{n-1}{m+\frac{A}{2}-1}+(-1)^{m+\frac{A}{2}-1}\binom{n-1}{2 m+A-1}\right)\,. 
\end{align*} 
For odd $A$ with $A\ge 1$, we have 
\begin{align*}
&\sum_{j=1}^{m}\mathfrak Z_n(\zeta_n;;\underbrace{2,\dots,2}_{j-1},A,\underbrace{2,\dots,2}_{m-j})\\
&=\sum_{k=0}^{\frac{A-3}{2}}\sum_{j=1}^{A-2 k-2}\frac{(-1)^{k+1}n^j\beta_j(n^{-1})}{n(m+k+1)j!}\binom{A-2 k-3}{j-1}\left(\binom{n-1}{m+k}+(-1)^{m+k}\binom{n-1}{2 m+2 k+1}\right)\\
&\quad +(-1)^{\frac{A-1}{2}}\frac{1}{n}\left(\binom{n-1}{m+\frac{A-1}{2}}+(-1)^{m+\frac{A-3}{2}}\binom{n-1}{2 m+A-1}\right)\,. 
\end{align*} 
\label{th:2222}
\end{theorem}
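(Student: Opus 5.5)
We follow the route used for Theorem \ref{th:2}: iterate the recursion (\ref{eq:22a22}) down to a base case that is known in closed form, then substitute (\ref{eq:mzv1s-degber}) for each $e_1^{(B)}$.

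Rewriting (\ref{eq:22a22}) with $A$ replaced by $A-1$ gives
$$
\mathcal R_m^{(A)}=e_1^{(A-2)}e_m^{(2)}-\mathcal R_{m+1}^{(A-2)}\,.
$$
Each application lowers $A$ by $2$ and raises $m$ by $1$. Iterating $K$ times yields
$$
\mathcal R_m^{(A)}=\sum_{k=0}^{K-1}(-1)^k e_1^{(A-2k-2)}e_{m+k}^{(2)}+(-1)^K\mathcal R_{m+K}^{(A-2K)}\,,
$$
which is a routine induction on $K$. Its inductive step is exactly the displayed recursion applied to $\mathcal R_{m+K}^{(A-2K)}$. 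This step is legitimate as long as $A-2K\ge 3$, so that $A-2K-1\ge 2$ and (\ref{eq:22a22}) applies with a positive exponent $e_1^{(A-2K-2)}$.

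\textbf{Even $A$.} Stop at $K=\frac A2-1$, so the base term is $\mathcal R_{m+\frac A2-1}^{(2)}$. By the remark that $R_m^{(2,j)}(n)=e_m^{(2)}$ for every $j$, we have $\mathcal R_M^{(2)}=M e_M^{(2)}$ with $M=m+\frac A2-1$. Inserting \cite[Theorem 5]{Ko25a},
$$
e_M^{(2)}=\frac{1}{n(M+1)}\left(\binom{n-1}{M}+(-1)^M\binom{n-1}{2M+1}\right),
$$
gives exactly the boundary term of the statement, with sign $(-1)^{\frac A2-1}$ and binomial index $2M+1=2m+A-1$.

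\textbf{Odd $A$.} Stop at $K=\frac{A-1}{2}$, so the base term is $\mathcal R_{m+\frac{A-1}2}^{(1)}$, which is given by Lemma \ref{lem:22122} (\ref{eq:222221}). With $M=m+\frac{A-1}{2}$ this is $\frac1n\bigl(\binom{n-1}{M}+(-1)^{M-1}\binom{n-1}{2M}\bigr)$. Here $2M=2m+A-1$ and $M-1=m+\frac{A-3}2$, which matches the stated boundary term with sign $(-1)^{\frac{A-1}2}$.

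The sum then runs over $k=0,\dots,\frac{A-3}{2}$. This is consistent with the case $A=3$, where the last step uses $e_1^{(1)}=\frac{n-1}{2}$; that value is itself given by (\ref{eq:mzv1s-degber}) with $s=1$.

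\textbf{Main sum.} In both cases, substitute into each summand
$$
e_1^{(A-2k-2)}=-\sum_{j=1}^{A-2k-2}\binom{A-2k-3}{j-1}\beta_j(n^{-1})\frac{n^j}{j!}
$$
and
$$
e_{m+k}^{(2)}=\frac{1}{n(m+k+1)}\left(\binom{n-1}{m+k}+(-1)^{m+k}\binom{n-1}{2m+2k+1}\right).
$$
Collecting the factor $(-1)^k\cdot(-1)=(-1)^{k+1}$ produces the double sum in the statement.

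The main obstacle is bookkeeping rather than ideas. We must get the iteration range right in each parity, and check the even-$A$ base case carefully: there the factor $M$ from $\mathcal R_M^{(2)}=Me_M^{(2)}$ must combine into the coefficient $\frac{m+\frac A2-1}{n(m+\frac A2)}$. We also need to confirm that the lowest exponent reached is $A-2k-2\ge1$, so that (\ref{eq:mzv1s-degber}) is always invoked with $s\ge1$.
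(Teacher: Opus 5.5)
Your proposal is correct and follows the paper's proof essentially step for step. You telescope $\mathcal R_m^{(A)}=e_1^{(A-2)}e_m^{(2)}-\mathcal R_{m+1}^{(A-2)}$ down to $\mathcal R^{(2)}_{m+\frac A2-1}=(m+\frac A2-1)e^{(2)}_{m+\frac A2-1}$ for even $A$, or to $\mathcal R^{(1)}_{m+\frac{A-1}2}$ via Lemma \ref{lem:22122} for odd $A$, and then substitute (\ref{eq:mzv1s-degber}) and \cite[Theorem 5]{Ko25a}, with the summation ranges, signs, and the check $s=A-2k-2\ge 1$ all handled accurately.
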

\begin{proof}
When $A=2A'$ is even with $A'\ge 1$,  we have 
\begin{align*}
&\mathcal R_m^{(2 A')}\\
&=e_1^{(2 A'-2)}e_m^{2}-\mathcal R_{m+1}^{(2 A'-2)}\\
&=\sum_{k=0}^{A'-2}(-1)^k e_1^{2 A'-2 k-2}e_{m+k}^{(2)}+(-1)^{A'-1}\mathcal R_{m+A'-1}^{(2)}\\
&=\sum_{k=0}^{A'-2}(-1)^k(-1)\sum_{j=1}^{2 A'-2 k-2}\binom{2 A'-2 k-3}{j-1}\beta_j(n^{-1})\frac{n^j}{j!}\\
&\qquad\times\frac{1}{n(m+k+1)}\left(\binom{n-1}{m+k}+(-1)^{m+k}\binom{n-1}{2 m+2 k+1}\right)\\
&\quad +(-1)^{A'-1}\frac{m+A'-1}{n(m+A')}\left(\binom{n-1}{m+A'-1}+(-1)^{m+A'-1}\binom{n-1}{2 m+2 A'-1}\right)\\
&=\sum_{k=0}^{A'-2}\sum_{j=1}^{2 A'-2 k-2}\frac{(-1)^{k+1}n^j\beta_j(n^{-1})}{n(m+k+1)j!}\binom{2 A'-2 k-3}{j-1}\left(\binom{n-1}{m+k}+(-1)^{m+k}\binom{n-1}{2 m+2 k+1}\right)\\
&\quad +(-1)^{A'-1}\frac{m+A'-1}{n(m+A')}\left(\binom{n-1}{m+A'-1}+(-1)^{m+A'-1}\binom{n-1}{2 m+2 A'-1}\right)\,. 
\end{align*}
When $A=2A'+1$ is odd with $A'\ge 1$,  we have 
\begin{align*}
&\mathcal R_m^{(2 A'+1)}\\
&=e_1^{(2 A'-1)}e_m^{2}-\mathcal R_{m+1}^{(2 A'-1)}\\
&=\sum_{k=0}^{A'-1}(-1)^k e_1^{2 A'-2 k-1}e_{m+k}^{(2)}+(-1)^{A'}\mathcal R_{m+A'}^{(1)}\\
&=\sum_{k=0}^{A'-1}c(-1)\sum_{j=1}^{2 A'-2 k-1}\binom{2 A'-2 k-2}{j-1}\beta_j(n^{-1})\frac{n^j}{j!}\\
&\qquad\times\frac{1}{n(m+k+1)}\left(\binom{n-1}{m+k}+(-1)^{m+k}\binom{n-1}{2 m+2 k+1}\right)\\
&\quad +(-1)^{A'}\frac{1}{n}\left(\binom{n-1}{m+A'}+(-1)^{m+A'-1}\binom{n-1}{2 m+2 A'}\right)\\
&=\sum_{k=0}^{A'-1}\sum_{j=1}^{2 A'-2 k-1}\frac{(-1)^{k+1}n^j\beta_j(n^{-1})}{n(m+k+1)j!}\binom{2 A'-2 k-2}{j-1}\left(\binom{n-1}{m+k}+(-1)^{m+k}\binom{n-1}{2 m+2 k+1}\right)\\
&\quad +(-1)^{A'}\frac{1}{n}\left(\binom{n-1}{m+A'}+(-1)^{m+A'-1}\binom{n-1}{2 m+2 A'}\right)\,. 
\end{align*} 
This is also valid for $A'=0$ because the first term is nullified and the second term is the right-hand side of the identity (\ref{eq:222221}) in Lemma \ref{lem:22122}.  
\end{proof}

\section{Comments and future works}   
     
Using the method developed in this paper, it seems possible to derive explicit formulas for the sum of 
\begin{equation}
\sum_{1\le i_1<\dots<i_m\le n-1}u_1^3\cdots u_{j-1}^3 u_j^A u_{j+1}^3\cdots u_m^3\,.
\label{eq:33a33}
\end{equation}
However, for that, it is necessary to know the explicit formula of the fundamental values of the sum of (\ref{eq:33a33}) when $A=1,2,3$.


\section*{Acknowledgement}  


\end{document}